\titleformat{\chapter}[display]
{\normalfont\huge\bfseries}{\chaptertitlename\\thechapter}{20pt}{\Huge}
\titleformat{\subsubsection}[runin]
{\normalfont\normalsize\bfseries}{\thesubsubsection}{1em}{}
\titleformat{\paragraph}[runin]
{\normalfont\normalsize\bfseries}{\theparagraph}{1em}{}
\titleformat{\subparagraph}[runin]
{\normalfont\normalsize\bfseries}{\thesubparagraph}{1em}{}
\titlespacing*{\chapter} {0pt}{50pt}{40pt}
\titlespacing*{\section} {0pt}{3.5ex plus 1ex minus .2ex}{2.3ex plus .2ex}
\titlespacing*{\subsection} {0pt}{3.25ex plus 1ex minus .2ex}{1.5ex plus .2ex}
\titlespacing*{\subsubsection}{0pt}{3.25ex plus 1ex minus .2ex}{1.5ex plus .2ex}
\titlespacing*{\paragraph} {0pt}{3.25ex plus 1ex minus .2ex}{1em}
\titlespacing*{\subparagraph} {\parindent}{3.25ex plus 1ex minus .2ex}{1em}
\newtheorem{theorem}{Theorem}[section]
\newtheorem{lemma}[theorem]{Lemma}
\newtheorem{proposition}[theorem]{Proposition}
\theoremstyle{definition}
\newtheorem{notation}[theorem]{Notation}
\newtheorem{example}[theorem]{Example}
\theoremstyle{remark}
\newtheorem{remark}[theorem]{Remark}
\DeclareMathOperator{\Jac}{Jac}
\keywords{The two-dimensional Jacobian Conjecture}
\subjclass[2010]{Primary 14R15}
\title{A variation on Magnus' theorem and its generalizations}
\author{Vered Moskowicz}
\address{Department of Mathematics, Bar-Ilan University, Ramat-Gan 52900, Israel.}
\email{vered.moskowicz@gmail.com}
\begin{document}
\begin{abstract}
Let $k$ be a field of characteristic zero, and let $f: k[x,y] \to k[x,y]$, $f: (x,y) \mapsto (p,q)$, 
be a $k$-algebra endomorphism having an invertible Jacobian. 

Write $p=a_ny^n+\cdots+a_1y+a_0$, where
$n=\deg_y(p) \in \mathbb{N}$, $a_i \in k[x]$, $0 \leq i \leq n$, $a_n \neq 0$,
and $q=c_ry^r+\cdots+c_1y+c_0$, where
$r=\deg_y(q) \in \mathbb{N}$, $c_i \in k[x]$, $0 \leq i \leq r$, $c_r \neq 0$.
Denote the set of prime numbers by $P$. 

Under two mild conditions, we prove that, if 
$$
\gcd(\gcd(n,\deg_x(a_n)),\gcd(r,\deg_x(c_r))) \in \{1,8\} \cup P \cup 2P,
$$ 
then $f$ is an automorphism of $k[x,y]$.

Removing (at least one of) the two mild conditions, we present two additional results.
One of the additional results implies that the known form of a counterexample $(P,Q)$ 
to the two-dimensional Jacobian Conjecture,
$l_{1,1}(P)=\epsilon x^{\alpha \mu}y^{\beta \mu}$, $l_{1,1}(Q)=\delta x^{\alpha \nu}y^{\beta \nu}$,
where $\epsilon,\delta \in k^{\times}$,
$1 < \alpha <\beta$, $d:=\gcd(\alpha,\beta) > 1$, $1 < \nu < \mu$, $\gcd(\mu,\nu)=1$,
actually satisfies $d > 2$.
\end{abstract}

\maketitle

\section{Introduction}
Throughout this note, $k$ is a field of characteristic zero
and $f: k[x,y] \to k[x,y]$, $f: (x,y) \mapsto (p,q)$,
is a $k$-algebra endomorphism having an invertible Jacobian,
$\Jac(p,q) \in k^{\times}$.

The famous two-dimensional Jacobian Conjecture, raised by O. H. Keller ~\cite{keller} in 1939,
says that such $f$ is an automorphism of $k[x,y]$.
For more details, see, for example, ~\cite{bcw}, ~\cite{essen affine} and ~\cite{essen book}.

Denote the set of prime numbers by $P$. 

Recall the following results:
\begin{theorem}\label{thm 1}
$f$ is an automorphism of $k[x,y]$ if $\gcd(\deg(p),\deg(q))$:
\begin{itemize}
\item is $1$. 
\item is $\leq 2$. 
\item is $\leq 8$ or belongs to $P$.                   
\item belongs to $2P$.
\end{itemize}
\end{theorem}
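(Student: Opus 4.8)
Since Theorem~\ref{thm 1} is an assembly of results already available in the literature (see the references in the introduction), the plan is not to reprove each item from scratch but to describe the single mechanism underlying all of them and to indicate where the real work lies. \emph{Reduction to leading forms.} If $\min(\deg p,\deg q)=1$, then $f$ is plainly a composition of affine and elementary automorphisms, so assume $\deg p,\deg q\ge 2$. Write $p=p^{*}+(\text{lower-degree terms})$ and $q=q^{*}+(\text{lower-degree terms})$, where $p^{*},q^{*}$ are the nonzero top-degree homogeneous parts. The homogeneous component of $\Jac(p,q)$ in degree $\deg p+\deg q-2$ equals $\Jac(p^{*},q^{*})$; since $\deg p+\deg q-2\ge 2$ whereas $\Jac(p,q)\in k^{\times}$ has degree $0$, we get $\Jac(p^{*},q^{*})=0$, i.e.\ $p^{*}$ and $q^{*}$ are algebraically dependent over $k$. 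Over a field of characteristic zero, two homogeneous polynomials of positive degree with vanishing Jacobian are, up to nonzero scalars, powers of a common homogeneous polynomial: there are a homogeneous $h\in k[x,y]$ of degree $d:=\gcd(\deg p,\deg q)$, scalars $\epsilon,\delta\in k^{\times}$, and coprime integers $a=\deg p/d$, $b=\deg q/d$ with $p^{*}=\epsilon h^{a}$ and $q^{*}=\delta h^{b}$.

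\emph{The reduction step, and when it fails.} I would then argue by induction on $\deg p+\deg q$. If $\deg q\mid\deg p$ then $b\mid a$, hence $b=1$; so $h=\delta^{-1}q^{*}$ and $p^{*}=\epsilon\delta^{-a}(q^{*})^{a}=\epsilon\delta^{-a}(q^{a})^{*}$, whence $p-\epsilon\delta^{-a}q^{a}$ has degree strictly below $\deg p$. The pair $(p-\epsilon\delta^{-a}q^{a},\,q)$ is again a Jacobian pair — it is $f$ composed with the elementary automorphism $(x,y)\mapsto(x-\epsilon\delta^{-a}y^{a},y)$, whose Jacobian is $1$ — and the inductive hypothesis finishes the case; symmetrically if $\deg p\mid\deg q$. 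Thus the only obstruction to lowering the degree is the situation in which $d>1$ (so $h$ is not linear) and neither of $\deg p,\deg q$ divides the other. The whole of Theorem~\ref{thm 1} amounts to the assertion that, when $d\in\{1,\dots,8\}\cup P\cup 2P$, this obstruction is compatible with $\Jac(p,q)\in k^{\times}$ only if $f$ is already an automorphism.

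\emph{What makes this hard.} To control the residual case one upgrades the leading-form analysis to a study of the entire Newton polygons of $p$ and of $q$: along each compact edge one reads off an edge leading form, vanishing of the Jacobian along a common edge direction forces these forms to be scalar multiples of powers of a single form supported on that edge, and one tracks the resulting arithmetic relations among the edge slopes, their lattice lengths, and $\deg p,\deg q$. Carried through with progressively more care — this is the line initiated by Magnus and continued by his successors — the bookkeeping shows that an irreducible non-automorphic Jacobian pair would have to satisfy divisibility relations so rigid that $\gcd(\deg p,\deg q)$ can be neither $\le 8$ nor prime, and a further refinement of the same analysis also excludes $2P$. The main obstacle is exactly this: each enlargement of the admissible list of values of $\gcd(\deg p,\deg q)$ demands control of one more family of Newton-polygon configurations, together with a case-by-case proof that no such configuration is realized by a genuine counterexample. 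It is precisely this analysis, recast in terms of the $\deg_{y}$-expansions of $p$ and $q$ used in the abstract, that the present note revisits through a variation on Magnus' method.
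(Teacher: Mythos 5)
Your proposal is correct and takes essentially the same route as the paper: Theorem~\ref{thm 1} is proved there simply by citing Magnus, Nakai--Baba, Appelgate--Onishi/Nagata and \.Zoladek, which is exactly what you do for the substantive cases, and your added sketch of the leading-form reduction (vanishing of $\Jac(p^{*},q^{*})$, the common form $h$ of degree $\gcd(\deg p,\deg q)$, and the degree-lowering elementary composition when one degree divides the other) is accurate. Since neither you nor the paper reproves the hard Newton-polygon cases, there is nothing further to reconcile.
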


In short, Theorem \ref{thm 1} says that $f$ is an automorphism of $k[x,y]$ 
if 
$$
\gcd(\deg(p),\deg(q)) \in \{1,8\} \cup P \cup 2P.
$$

\begin{proof}
\begin{itemize}
\item Magnus ~\cite{magnus}. See also ~\cite[page 158]{nagata}.
\item Nakai-Baba ~\cite{baba nakai}.
\item Appelgate-Onishi ~\cite{app} and Nagata ~\cite[pages 158-159, 169-172]{nagata} ~\cite{nagata2}. 
\item \.Zoladek ~\cite{zoladek} (see also ~\cite{shape}).
\end{itemize}
\end{proof}

Based on the results of Theorem \ref{thm 1}, we have the following:
\begin{theorem}\label{thm 2}
$f$ is an automorphism of $k[x,y]$ if $\deg(p)$ or $\deg(q)$:
\begin{itemize}
\item belongs to $P$. 
\item belongs to $P^2=\{uv\}_{u,v \in P}$. 
\item belongs to $4P$.
\end{itemize}
\end{theorem}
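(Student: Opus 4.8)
The plan is to deduce Theorem \ref{thm 2} from Theorem \ref{thm 1} by combining an elementary arithmetic observation with the classical elementary reductions for Jacobian pairs. By the symmetry $(p,q)\mapsto(q,p)$, which preserves both the invertibility of the Jacobian and the property of being an automorphism, I may assume throughout that $N:=\deg(p)\in P\cup P^2\cup 4P$. Put $g:=\gcd(\deg(p),\deg(q))$; then $g$ is a divisor of $N$. The first step is a finite check over the three shapes $N=\ell$ ($\ell\in P$), $N=uv$ ($u\le v$ in $P$) and $N=4\ell$ ($\ell\in P$): every \emph{proper} divisor of such an $N$ lies in $\{1,8\}\cup P\cup 2P$ (for instance, for $N=4\ell$ with $\ell$ odd the proper divisors are $1,2,4,\ell,2\ell$, with $1\in\{1\}$, $2,\ell\in P$ and $4,2\ell\in 2P$). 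Moreover $N$ itself lies in $\{1,8\}\cup P\cup 2P$ except precisely when $N=uv$ with $3\le u\le v$, or $N=4\ell$ with $\ell\ge 3$. Hence if $g\neq N$ we already get $g\in\{1,8\}\cup P\cup 2P$, so Theorem \ref{thm 1} shows that $f$ is an automorphism; it remains to treat the case $g=N$, i.e.\ $\deg(p)\mid\deg(q)$.

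For that case I would invoke the standard fact that for a Jacobian pair the leading forms $\ov p,\ov q$ (the top-degree homogeneous components of $p,q$) satisfy $\Jac(\ov p,\ov q)=0$: since $\deg(p)=N\ge 2$, if $\Jac(\ov p,\ov q)$ were nonzero it would be the homogeneous component of $\Jac(p,q)$ of positive degree $\deg(p)+\deg(q)-2$, contradicting $\Jac(p,q)\in k^{\times}$. A classical consequence (Magnus \cite{magnus}; see also the treatment of elementary reductions in \cite{essen book}) is that, when $\deg(p)\mid\deg(q)$, there is $c\in k^{\times}$ with $\ov q=c\,\ov p^{\,\deg(q)/\deg(p)}$. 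Replacing $q$ by $q_1:=q-c\,p^{\,\deg(q)/\deg(p)}$ amounts to composing $f$ with the automorphism $(x,y)\mapsto(x,\,y-c\,x^{\deg(q)/\deg(p)})$ of $k[x,y]$; thus $(p,q_1)$ is again a Jacobian pair, it is an automorphism if and only if $f$ is, and $\deg(p)$ is unchanged while $1\le\deg(q_1)<\deg(q)$ (the lower bound because the second coordinate of a Jacobian pair cannot be constant).

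Iterating this reduction, $\deg(q)$ strictly decreases at each step and stays $\ge 1$, so after finitely many steps I reach a Jacobian pair $(p,\wt q)$, an automorphism iff $f$ is, with $\deg(\wt q)<\deg(p)=N$. Then $\gcd(\deg(p),\deg(\wt q))\le\deg(\wt q)<N$ is a \emph{proper} divisor of $N$, hence lies in $\{1,8\}\cup P\cup 2P$, and Theorem \ref{thm 1} shows that $(p,\wt q)$ — and therefore $f$ — is an automorphism. The only genuinely substantive point is the elementary‑reduction step of the second paragraph (the vanishing of $\Jac(\ov p,\ov q)$ and the resulting proportionality of $\ov q$ to a power of $\ov p$); the rest is bookkeeping of which divisors of $N$ can fail to lie in $\{1,8\}\cup P\cup 2P$, which is exactly where the specific shape of the sets $P$, $P^2$, $4P$ is used.
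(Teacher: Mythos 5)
Your proof is correct, and its engine is the same one the paper uses for the $4P$ bullet: check that every proper divisor of $N=\deg(p)$ lies in $\{1,8\}\cup P\cup 2P$ and apply Theorem \ref{thm 1}, and, in the remaining case $\gcd(\deg(p),\deg(q))=N$ (i.e.\ $\deg(p)\mid\deg(q)$), lower $\deg(q)$ by subtracting $c\,p^{\deg(q)/\deg(p)}$ and induct. What you do differently is make the argument uniform and self-contained: you prove the $P^2$ bullet by this same reduction instead of citing Appelgate--Onishi and Nagata as the paper does, and you supply the justification for the reduction step itself (that $\Jac(\ov p,\ov q)=0$ forces the leading forms to be powers of a common form, hence $\ov q=c\,\ov p^{\deg(q)/\deg(p)}$ when $\deg(p)\mid\deg(q)$), which the paper merely asserts via the existence of a suitable $\lambda$. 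That buys a proof of all three bullets from Theorem \ref{thm 1} alone, at the cost of invoking (or reproving) the classical leading-form lemma. One small imprecision to fix: the iteration need not literally reach $\deg(\wt q)<\deg(p)$, since it can only be performed while $\deg(p)\mid\deg(q_i)$; but the moment divisibility fails, $\gcd(\deg(p),\deg(q_i))$ is a proper divisor of $N$ and your first-paragraph observation together with Theorem \ref{thm 1} finishes the argument, so phrasing the induction as ``reduce while $\deg(p)\mid\deg(q_i)$'' makes the proof airtight without any new idea.
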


\begin{proof}
\begin{itemize}
\item Indeed, in this case, $\gcd(\deg(p),\deg(q)) \in \{1\} \cup P$, 
and we are done by Theorem \ref{thm 1}. See also ~\cite[Corollary 10.2.25]{essen book}.
\item See ~\cite{app} and ~\cite[pages 169-170, proof of (2)]{nagata}.
\item Indeed, if $\deg(p)=4w$, for some $w \in P$, 
then 
$$
\gcd(\deg(p),\deg(q)) \in \{1,2,4,w,2w,4w\}.
$$ 
Therefore,
\begin{itemize}
\item If $\gcd(\deg(p),\deg(q)) \in \{1,2,4,w,2w\}$, then we are done by Theorem \ref{thm 1}. 
\item If $\gcd(\deg(p),\deg(q))=4w$, then $\deg(p)=4w | \deg(q)$, 
so for some $\lambda \in k$ and $t:=\frac{\deg(q)}{\deg(p)} \in \mathbb{N}$,
we have $\deg(q-\lambda p^t) < \deg(q)$ and we are done by induction on $\deg(q)$
(this argument is the same as that for $\deg(p)$ belongs to $P^2$).
\end{itemize}
\end{itemize}
\end{proof}

Those results are dealing with the total degrees (also called $(1,1)$-degrees) of $p$ and $q$,
$\deg(p)$ and $\deg(q)$, while our results are dealing with the $y$-degrees of $p$ and $q$ 
(also called $(0,1)$-degrees) $\deg_y(p)$ and $\deg_y(q)$,
or with the $x$-degrees of $p$ and $q$ (also called $(1,0)$-degrees), $\deg_x(p)$ and $\deg_x(q)$.

\begin{notation}\label{notation}
We will use the following notations:

\begin{itemize}
\item $p=a_ny^n+\cdots+a_1y+a_0$, where $\deg_y(p)=n \in \mathbb{N}$, $a_i \in k[x]$, $0 \leq i \leq n$, $a_n \neq 0$.
\item $p=b_mx^m+\cdots+b_1x+b_0$, where $\deg_x(p)=m \in \mathbb{N}$, $b_j \in k[y]$, $0 \leq j \leq m$, $b_m \neq 0$.
\item $q=c_ry^r+\cdots+c_1y+c_0$, where $\deg_y(q)=r \in \mathbb{N}$, $c_i \in k[x]$, $0 \leq i \leq r$, $c_r \neq 0$.
\item $q=d_sx^s+\cdots+d_1x+d_0$, where $\deg_x(q)=s \in \mathbb{N}$, $d_j \in k[y]$, $0 \leq j \leq s$, $d_s \neq 0$.
\end{itemize}

\begin{itemize}
\item $A:=\gcd(\deg_y(p),\deg_x(a_n))= \gcd(n,\deg_x(a_n))$.
\item $B:=\gcd(\deg_x(p),\deg_y(b_m))= \gcd(m,\deg_y(b_m))$.
\item $C:=\gcd(\deg_y(q),\deg_x(c_r))= \gcd(r,\deg_x(c_r))$.
\item $D:=\gcd(\deg_x(q),\deg_y(d_s))= \gcd(s,\deg_y(d_s))$.
\end{itemize}

Further denote $u:=\deg_x(a_n)$ and $v:=\deg_x(c_r)$,
so
\begin{itemize}
\item $A= \gcd(n,u)$.
\item $C= \gcd(r,v)$.
\end{itemize}

Write $n=\gcd(n,u)\tilde{n}$, $u=\gcd(n,u)\tilde{u}$,
$r=\gcd(r,v)\tilde{r}$, $v=\gcd(r,v)\tilde{v}$.
Then, $\gcd(\tilde{n},\tilde{u})=1$ and $\gcd(\tilde{r},\tilde{v})=1$.

%$P$ will denote the set of prime numbers.
\end{notation}

Theorem \ref{my thm improved} says that, under two mild conditions,
if 
$$
\gcd(\gcd(n,\deg_x(a_n)),\gcd(r,\deg_x(c_r))) \in \{1,8\} \cup P \cup 2P,
$$ 
then $f$ is an automorphism of $k[x,y]$.
Removing (at least one of) the two mild conditions, we present two additional results.

One of the additional results implies that the known form of a counterexample $(P,Q)$ 
to the two-dimensional Jacobian Conjecture,
$l_{1,1}(P)=\epsilon x^{\alpha \mu}y^{\beta \mu}$, $l_{1,1}(Q)=\delta x^{\alpha \nu}y^{\beta \nu}$,
where $\epsilon,\delta \in k^{\times}$,
$1 < \alpha <\beta$, $d:=\gcd(\alpha,\beta) > 1$, $1 < \nu < \mu$, $\gcd(\mu,\nu)=1$,
actually satisfies $d > 2$.

Our proofs are based on Theorems \ref{thm 1} and \ref{thm 2} and on ideas from number theory
(Dirichlet's theorem on arithmetic progressions and its generalizations).

Of course, we could have replaced $\gcd(A,C)$ by $\gcd(B,D)$ etc., and get analogous results.
%%%%%%%%%%%%%%%%%%%%%%%%%%%%%%%%%%%%%%%%%%%%%%%%%%%%%%%%%%%%%%%%%%%%%%%%%%%%%%%%%%%%%%%%%%%%%%%%%%%%%%%%%%%%%%%%%%%%%%%%%%

\section{Our results}

We begin with an easy observation that will be implicitly used in the proofs of Theorems 
\ref{my thm improved}, \ref{my thm i yes ii no} and \ref{my thm}.

\begin{proposition}\label{prop}
If $n=0$ or $r=0$, then $f$ is an automorphism of $k[x,y]$.
\end{proposition}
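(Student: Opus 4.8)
The plan is to reduce to the easy case where one of the two polynomials lies in $k[x]$, and then argue that such an endomorphism with invertible Jacobian is forced to be a triangular (hence invertible) automorphism. Suppose, say, $n = \deg_y(p) = 0$, so that $p = p(x) \in k[x]$. The condition $\Jac(p,q) \in k^\times$ reads $p_x q_y - p_y q_x = p'(x)\, q_y \in k^\times$, since $p_y = 0$. From $p'(x) q_y \in k^\times$ we immediately get that $p'(x) \in k^\times$ (it is a nonzero constant, as it divides a unit in $k[x,y]$), hence $p = \lambda x + c$ with $\lambda \in k^\times$, $c \in k$; and likewise $q_y \in k^\times$.

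Next I would examine $q$. Writing $q = c_r(x) y^r + \cdots + c_0(x)$ with $r = \deg_y(q)$, the condition $q_y \in k^\times$ forces $q_y$ to have $y$-degree $0$, so $r \le 1$. If $r = 0$ then $q \in k[x]$ and $q_y = 0 \notin k^\times$, a contradiction; hence $r = 1$ and $q = c_1(x) y + c_0(x)$ with $c_1(x) = q_y \in k^\times$. Therefore $q = \mu y + g(x)$ for some $\mu \in k^\times$ and $g(x) \in k[x]$. Thus $f$ sends $(x,y) \mapsto (\lambda x + c,\ \mu y + g(x))$, which is visibly an automorphism of $k[x,y]$: its inverse is $(x,y) \mapsto \big(\lambda^{-1}(x-c),\ \mu^{-1}(y - g(\lambda^{-1}(x-c)))\big)$. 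The case $r = 0$ is symmetric, interchanging the roles of $p$ and $q$ (and of $x$-coefficients appropriately): $q = q(x) \in k[x]$ forces $q' q_y$—wait, rather $\Jac(p,q) = p_y q_x \cdot(-1) = -q'(x) p_y \in k^\times$—so $q'(x) \in k^\times$ and $p_y \in k^\times$, giving $p = \mu y + g(x)$, $q = \lambda x + c$, again an automorphism.

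There is essentially no obstacle here; the only thing to be slightly careful about is the degenerate sub-case within $n = 0$ where one might also have $r = 0$ simultaneously — but that case is excluded by the Jacobian condition itself (if both $p, q \in k[x]$ then $\Jac(p,q) = 0$), so the two bullet cases are mutually consistent and each independently yields an automorphism. I would also remark that this proposition is why, in all subsequent theorems, one may assume $n \ge 1$ and $r \ge 1$, so that the quantities $\tilde n, \tilde u, \tilde r, \tilde v$ of Notation~\ref{notation} and the gcd hypotheses on $A$ and $C$ are non-vacuous.
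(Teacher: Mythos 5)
Your proof is correct and is essentially the same argument as the paper's: from $n=0$ the Jacobian condition reads $p'(x)\,q_y \in k^{\times}$, forcing $p=\lambda x+c$ and $q=\mu y+g(x)$, which is a triangular automorphism (the paper handles $r=0$ simply by symmetry, as you do). The extra observations (explicit inverse, impossibility of $n=r=0$) are harmless additions, not a different route.
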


Of course, the analogous proposition says that if $m=0$ or $s=0$, then $f$ is an automorphism of $k[x,y]$.

\begin{proof}
W.l.o.g. $n=0$, so $p=a_0 \in k[x]$. 
Then $a_0'q_y=p_xq_y-p_yq_x=\Jac(p,q) \in k^{\times}$,
where $a_0'$ denotes the derivative of $a_0$ with respect to $x$.
This implies that $a_0'=\lambda$ and $q_y=\nu$,
for some $\lambda,\nu \in k^{\times}$.
Therefore, $a_0=\lambda x+\mu$ and $q=\nu y+H(x)$,
for some $\mu \in k$ and $H(x) \in k[x]$.
We obtained that
$f: (x,y) \mapsto (\lambda x+\mu, \nu y+H(x))$,
which is clearly a triangular automorphism of $k[x,y]$.
\end{proof}

Now recall the following nice result due to Dirichlet (1837) ~\cite{dirichlet}, 
which is applied in the proof of Lemma \ref{number theory lemma}.

\begin{theorem}[Dirichlet's theorem on arithmetic progressions]
Let $a,b$ be two positive coprime numbers, namely, $\gcd(a,b)=1$.
Then the arithmetic progression $\{a+nb\}_{n \in \mathbb{N}}$
contains infinitely many prime numbers. 
\end{theorem}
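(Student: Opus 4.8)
The plan is to follow Dirichlet's original analytic strategy, organised around the characters of the group $(\mathbb{Z}/b\mathbb{Z})^{\times}$ and their $L$-series. First I would fix the modulus $b$ and introduce the Dirichlet characters $\chi\colon(\mathbb{Z}/b\mathbb{Z})^{\times}\to\mathbb{C}^{\times}$, extended by zero to all of $\mathbb{Z}$, together with the $L$-functions $L(s,\chi)=\sum_{n\geq 1}\chi(n)n^{-s}$, which converge for $\Re(s)>1$ and there factor as Euler products $L(s,\chi)=\prod_{p}(1-\chi(p)p^{-s})^{-1}$. Taking logarithms of the Euler product and discarding the uniformly bounded contribution of the higher prime powers, one obtains $\log L(s,\chi)=\sum_{p}\chi(p)p^{-s}+O(1)$ as $s\to 1^{+}$.

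Next I would invoke the orthogonality relations: since $\frac{1}{\varphi(b)}\sum_{\chi}\overline{\chi(a)}\chi(n)$ equals $1$ when $n\equiv a\pmod b$ and $0$ otherwise (using $\gcd(a,b)=1$), averaging the previous identity against $\overline{\chi(a)}$ gives
$$
\sum_{p\,\equiv\,a\,(b)}p^{-s}\;=\;\frac{1}{\varphi(b)}\sum_{\chi}\overline{\chi(a)}\,\log L(s,\chi)\;+\;O(1).
$$
So everything reduces to the behaviour of the right-hand side as $s\to 1^{+}$. For the principal character $\chi_{0}$, the function $L(s,\chi_{0})$ differs from the Riemann zeta function only by the finitely many Euler factors at primes dividing $b$, hence $\log L(s,\chi_{0})=\log\frac{1}{s-1}+O(1)$; this single term already forces the left-hand side to diverge. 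The whole proof then hinges on showing that each non-principal $\chi$ contributes only a bounded amount, i.e. that $L(s,\chi)$ extends continuously to $s=1$ with $L(1,\chi)\neq 0$.

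Continuity up to $s=1$ is the easy half: for non-principal $\chi$ the partial sums $\sum_{n\leq N}\chi(n)$ are bounded (they vanish over each complete block of length $b$), so Abel summation shows that the series defining $L(s,\chi)$ converges and is holomorphic for $\Re(s)>0$. The genuine obstacle, and the heart of the argument, is the non-vanishing $L(1,\chi)\neq 0$ for $\chi\neq\chi_{0}$. For a complex (non-real) character I would argue by contradiction: if $L(1,\chi)=0$ then also $L(1,\overline{\chi})=0$, so the product $\prod_{\chi}L(s,\chi)$ would carry a double zero that beats the simple pole coming from $\chi_{0}$ and would tend to $0$ as $s\to 1^{+}$; but expanded as a Dirichlet series this product has non-negative coefficients and a constant term $\geq 1$, hence stays $\geq 1$ for real $s>1$ — a contradiction. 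The delicate remaining case is a real non-principal (quadratic) character $\chi=\overline{\chi}$, where the pairing trick collapses; here I would instead study $\zeta(s)L(s,\chi)$, whose coefficients $\sum_{d\mid n}\chi(d)$ are $\geq 0$ and are $\geq 1$ on perfect squares. If $L(1,\chi)=0$, this function would be holomorphic on $\Re(s)>0$, so by Landau's theorem on Dirichlet series with non-negative coefficients the series would converge there; yet at $s=\tfrac12$ it dominates $\sum_{m\geq 1}m^{-1}$, which diverges — again a contradiction. With $L(1,\chi)\neq 0$ for every non-principal $\chi$, the displayed formula yields $\sum_{p\equiv a\,(b)}p^{-s}\to\infty$ as $s\to 1^{+}$, so the progression $\{a+nb\}_{n\in\mathbb{N}}$ must contain infinitely many primes, which is the assertion.
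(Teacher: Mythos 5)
The paper does not actually prove this statement: Dirichlet's theorem is recalled as a classical result and attributed to the original 1837 memoir, so there is no internal argument to compare yours against. What you have written is a correct outline of the standard analytic proof: characters modulo $b$, Euler products, orthogonality to isolate the residue class $a \bmod b$, the logarithmic blow-up of $L(s,\chi_0)$ at $s=1$, and the reduction of everything to the non-vanishing $L(1,\chi)\neq 0$ for non-principal $\chi$, with the complex case handled by the positivity of $\prod_{\chi}L(s,\chi)$ for real $s>1$ and the real quadratic case by Landau's theorem applied to $\zeta(s)L(s,\chi)$. The key steps are the right ones and both contradictions are set up correctly. Two remarks. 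First, the identity $\log L(s,\chi)=\sum_{p}\chi(p)p^{-s}+O(1)$ requires the ``logarithm'' to be defined as the Euler-product series $\sum_{p}\sum_{k\geq 1}\chi(p)^{k}k^{-1}p^{-ks}$ (whose exponential is $L(s,\chi)$) rather than as a branch of the logarithm of the analytically continued function; your sketch glosses over this standard but necessary point, and the same series is what shows $\prod_{\chi}L(s,\chi)\geq 1$ since $\sum_{\chi}\chi(p^{k})\geq 0$. Second, your treatment of the real non-principal character is the modern route (in the style of de la Vall\'ee Poussin, Landau and Serre) rather than Dirichlet's original class-number argument, but it is complete and arguably cleaner. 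Within the economy of this paper, which only uses the theorem as a black box in Lemma~\ref{number theory lemma} and in the proof of Theorem~\ref{my thm}, citing the result is the natural choice; your argument would serve as a correct self-contained substitute once the two technical points above are filled in.
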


The proof of Theorem \ref{my thm improved} is based on the following lemma:

\begin{lemma}\label{number theory lemma}
Let $a,b,c,d,\epsilon \in \mathbb{N}-\{0\}$ and assume that 
$\gcd(a,b)=1$ and $\gcd(c,d)=1$.
Then there exists $L \in \mathbb{N}$ (actually infinitely many $L$'s) such that: 
\begin{itemize}
\item The maximum of $\{a+Lb,c+Ld\}$ is a prime number.
\item $\gcd(\epsilon,a+Lb)=1$ and $\gcd(\epsilon,c+Ld)=1$.
\end{itemize}
\end{lemma}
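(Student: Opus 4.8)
The plan is to produce the required $L$ in two stages: first arrange that $\max\{a+Lb,\,c+Ld\}$ is prime, then show that among the infinitely many $L$ doing so, infinitely many also satisfy the two coprimality conditions with $\epsilon$. Since a common offset can be absorbed, I would first reduce to a single arithmetic progression. Assume without loss of generality that the maximum is attained (for all large $L$) by, say, $a+Lb$; this holds as soon as $b>d$, or $b=d$ and $a\ge c$, so after possibly swapping the two pairs $(a,b)$ and $(c,d)$ one of the finitely many cases applies — and if $b=d$ and $a=c$ the two quantities coincide and the argument is only easier. Then I apply Dirichlet's theorem to the progression $\{a+Lb\}_{L\in\mathbb N}$, which is legitimate because $\gcd(a,b)=1$: it contains infinitely many primes, giving infinitely many $L$ with $a+Lb$ prime and hence $\max\{a+Lb,c+Ld\}=a+Lb$ prime.

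Next I impose the coprimality with $\epsilon$. For the prime value $a+Lb$ itself: once $a+Lb$ is a prime exceeding $\epsilon$ we automatically get $\gcd(\epsilon,a+Lb)=1$, and all but finitely many of the primes found above exceed $\epsilon$, so this costs nothing. The real constraint is $\gcd(\epsilon,c+Ld)=1$. Here I would refine the choice of $L$ by a congruence condition. Write $\epsilon=\prod_{i} \ell_i^{e_i}$ with the $\ell_i$ distinct primes. For each $i$ I want $c+Ld\not\equiv 0\pmod{\ell_i}$. If $\ell_i\nmid d$ this excludes exactly one residue class of $L$ modulo $\ell_i$; if $\ell_i\mid d$ then $c+Ld\equiv c\pmod{\ell_i}$ is independent of $L$, and I claim it is automatically nonzero mod $\ell_i$ — indeed $\ell_i\mid d$ and $\gcd(c,d)=1$ force $\ell_i\nmid c$ — so no constraint is needed in that case. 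Thus the admissible $L$ for the second condition form a union of residue classes modulo $N:=\prod_{\ell_i\nmid d}\ell_i$, and this set is nonempty: for each such $\ell_i$ at most one class mod $\ell_i$ is forbidden and $\ell_i\ge 2$, so by CRT there is at least one class mod $N$ that is good simultaneously, in fact a positive density of them.

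It remains to intersect the two requirements. Fix a good residue class $L\equiv L_0\pmod N$ from the previous paragraph; I must still hit a prime in $\{a+Lb\}$ while staying in this class. Restricting $L$ to $L=L_0+Nt$ turns $a+Lb$ into $(a+L_0 b)+(Nb)t$, another arithmetic progression in $t$. To apply Dirichlet to it I need $\gcd(a+L_0 b,\,Nb)=1$; if this fails I simply pick a different representative — there are infinitely many $L_0'$ in the class modulo $N$, and standard arguments (or directly: choose $L_0$ mod $N\cdot(\text{rad}(b))$ avoiding the finitely many bad classes) let me choose $L_0$ so that $a+L_0b$ is coprime to $Nb$. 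Concretely, for each prime $\ell\mid Nb$ I need one more non-forbidden class for $L_0$ mod $\ell$, and since $\gcd(a,b)=1$ and $\gcd$ of the original data is controlled, at most one class mod $\ell$ is excluded, so CRT again delivers a workable $L_0$. Then Dirichlet's theorem gives infinitely many $t$ with $(a+L_0b)+(Nb)t$ prime; each such $L=L_0+Nt$ satisfies all three conclusions once it is large enough that the prime exceeds $\epsilon$.

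The main obstacle is the bookkeeping in this last intersection step: one must be careful that restricting to the residue class $L\equiv L_0\pmod N$ does not destroy the coprimality $\gcd(a+L_0b,Nb)=1$ needed to reinvoke Dirichlet, and that the choices of forbidden-class-avoidance for the different primes (those dividing $\epsilon$, those dividing $b$, those dividing $N$) are mutually compatible. All of this is a single application of the Chinese Remainder Theorem, but it is the place where one could slip. Everything else — reducing to one progression, handling the prime value's coprimality with $\epsilon$ for free, and the observation that primes dividing $d$ impose no condition by $\gcd(c,d)=1$ — is routine.
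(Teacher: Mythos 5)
There is nothing in the paper to compare your argument with step by step: the paper's ``proof'' of Lemma \ref{number theory lemma} is only a citation of an answer of Erick B. Wong on MathStackExchange. Your opening moves are fine -- reducing to the case where the maximum is $a+Lb$ for all large $L$, invoking Dirichlet via $\gcd(a,b)=1$, getting $\gcd(\epsilon,a+Lb)=1$ for free once the prime $a+Lb$ exceeds $\epsilon$, and noting that primes dividing $d$ impose no condition because $\gcd(c,d)=1$.

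The genuine gap is at the final CRT step, and it cannot be closed. For a prime $\ell$ dividing $\epsilon$ with $\ell\nmid b$ and $\ell\nmid d$, you must avoid \emph{two} residue classes of $L$ modulo $\ell$: the one with $c+Ld\equiv 0\pmod{\ell}$ and the one with $a+Lb\equiv 0\pmod{\ell}$ (the latter is forced by the conclusion $\gcd(\epsilon,a+Lb)=1$ itself, not merely by your wish to reinvoke Dirichlet with $\gcd(a+L_0b,Nb)=1$). These classes are distinct unless $ad\equiv bc\pmod{\ell}$, and for $\ell=2$ two distinct classes exhaust $\mathbb{Z}/2\mathbb{Z}$, so no admissible $L_0$ exists; your assertion that ``at most one class mod $\ell$ is excluded, so CRT again delivers a workable $L_0$'' fails exactly there. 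This is not a bookkeeping slip that a more careful CRT can repair: take $a=1$, $b=1$, $c=2$, $d=1$, $\epsilon=2$, which satisfy all hypotheses of the lemma; then $a+Lb=L+1$ and $c+Ld=L+2$ are consecutive integers, so for every $L$ one of them is even and the second bullet fails. Hence the lemma as stated is false, and any proof must break at this point. What is needed is an extra hypothesis ruling out the mod-$2$ obstruction (for instance $2\nmid\epsilon$, or $a\equiv c\pmod 2$ when $b$ and $d$ are odd and $2\mid\epsilon$; more generally, that for each prime $\ell\mid\epsilon$ the two non-vanishing conditions are simultaneously solvable mod $\ell$), together with a check that the data $a=\tilde{u}$, $b=\tilde{n}$, $c=\tilde{v}$, $d=\tilde{r}$, $\epsilon=AC$ arising in the proof of Theorem \ref{my thm improved} actually satisfy it. You should flag this to the author rather than attempt to patch the intersection step.
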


\begin{proof}
The proof is due to Erick B. Wong and can be found in ~\cite{mse wong 2} (which uses ~\cite{mse wong 1}).
\end{proof}

Now we are ready to present and prove:

\begin{theorem}\label{my thm improved}
Assume that: 
\begin{itemize}
\item [(i)] $uv \neq 0$.
\item [(ii)] $\tilde{u} \neq \tilde{v}$ or $\tilde{n} \neq \tilde{r}$.
\end{itemize}
If $\gcd(A,C) \in \{1,8\} \cup P \cup 2P$, 
then $f$ is an automorphism of $k[x,y]$.
\end{theorem}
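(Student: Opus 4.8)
The plan is to reduce to Theorem~\ref{thm 1} by composing $f$ with a well-chosen triangular automorphism $\tau_L\colon x\mapsto x,\ y\mapsto y+x^L$, which converts the $y$-degree data $(n,u)$ and $(r,v)$ into total-degree data. By Proposition~\ref{prop} we may assume $n\geq 1$ and $r\geq 1$. All the hypotheses and the conclusion are symmetric under interchanging $p$ and $q$: replacing $f$ by $f\circ\rho$, where $\rho\colon x\mapsto y,\ y\mapsto x$ is the coordinate swap, gives the endomorphism $(x,y)\mapsto(q,p)$, which again has invertible Jacobian, is an automorphism iff $f$ is, and interchanges $(n,u,A,\tilde n,\tilde u)\leftrightarrow(r,v,C,\tilde r,\tilde v)$ while leaving $\gcd(A,C)$ unchanged. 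Hence, using condition~(ii), we may assume without loss of generality that either $\tilde n>\tilde r$, or $\tilde n=\tilde r$ and $\tilde u>\tilde v$; in particular $\tilde u+L\tilde n>\tilde v+L\tilde r$ for every sufficiently large $L$.

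Fix a positive integer $L$ and set $p':=\tau_L(p)=p(x,y+x^L)$, $q':=\tau_L(q)=q(x,y+x^L)$, and $f':=\tau_L\circ f\colon(x,y)\mapsto(p',q')$. Then $f'$ is again a $k$-algebra endomorphism of $k[x,y]$ with $\Jac(p',q')=\Jac(p,q)\in k^\times$ (by the chain rule, since $\tau_L$ is an automorphism), and $f'$ is an automorphism iff $f$ is. First I would check, by expanding $(y+x^L)^i$ and comparing the total degrees of the resulting monomials, that for all sufficiently large $L$ the polynomial $p'$ has a unique monomial of top total degree, namely a nonzero scalar multiple of $x^{u+Ln}$ coming from the leading term of $a_n$, so that $\deg(p')=u+Ln$; likewise $\deg(q')=v+Lr$. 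Writing $n=A\tilde n$, $u=A\tilde u$, $r=C\tilde r$, $v=C\tilde v$, this gives $\deg(p')=A(\tilde u+L\tilde n)$ and $\deg(q')=C(\tilde v+L\tilde r)$.

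Next I would invoke Lemma~\ref{number theory lemma} with $(a,b,c,d,\epsilon)=(\tilde u,\tilde n,\tilde v,\tilde r,AC)$: here $\tilde u,\tilde v\geq 1$ by condition~(i), $\tilde n,\tilde r\geq 1$ because $n,r\geq 1$, $AC\geq 1$, and $\gcd(\tilde u,\tilde n)=\gcd(\tilde v,\tilde r)=1$ by construction. The lemma furnishes infinitely many $L$ — hence arbitrarily large ones — for which $\max\{\tilde u+L\tilde n,\ \tilde v+L\tilde r\}$ is prime and both $\tilde u+L\tilde n$ and $\tilde v+L\tilde r$ are coprime to $AC$. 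Choose such an $L$ that is moreover large enough for the two degree formulas above to hold and for $P_0:=\tilde u+L\tilde n$ to exceed $M:=\tilde v+L\tilde r$ (possible by the above normalization). Then $P_0$ is the said maximum, hence prime; also $\gcd(AC,P_0)=\gcd(AC,M)=1$ and $P_0>M\geq 1$. Therefore $P_0\nmid C$ and $P_0\nmid M$, so $P_0\nmid CM$, which yields $\gcd(\deg p',\deg q')=\gcd(AP_0,CM)=\gcd(A,CM)$; and since $\gcd(A,M)=1$ this equals $\gcd(A,C)$. As $\gcd(A,C)\in\{1,8\}\cup P\cup 2P$ by hypothesis, Theorem~\ref{thm 1} shows that $f'$ is an automorphism of $k[x,y]$, and hence so is $f$.

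The only genuinely hard input is Lemma~\ref{number theory lemma} (simultaneously forcing one linear form to be prime while a second linear form stays coprime to a prescribed modulus), and it is quoted from the literature; the remainder is change-of-variables bookkeeping — verifying the two degree formulas and that the top-degree monomial does not cancel — together with elementary gcd manipulations. The point that needs care is that condition~(ii) is precisely what makes $P_0=\tilde u+L\tilde n$ and $M=\tilde v+L\tilde r$ unequal (indeed $P_0>M$) for large $L$, which is what prevents a large common factor from surviving in $\gcd(\deg p',\deg q')$; if instead $\tilde n=\tilde r$ and $\tilde u=\tilde v$, then $\deg p'/A=\deg q'/C$ for every $L$ and the argument collapses — this is why this case must be treated separately.
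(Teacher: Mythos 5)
Your proposal is correct and follows essentially the same route as the paper: compose $f$ with the triangular map $y\mapsto y+x^{L}$, compute $\deg = A(\tilde u+L\tilde n)$ and $C(\tilde v+L\tilde r)$, invoke Lemma~\ref{number theory lemma} with $\epsilon=AC$, deduce that the gcd of the new total degrees is $\gcd(A,C)$, and finish with Theorem~\ref{thm 1}. The only differences are cosmetic: your preliminary swap of $p$ and $q$ to fix which linear form is the maximum, and your more explicit gcd bookkeeping, merely spell out what the paper leaves as ``it is clear''.
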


Of course, there exists an analogous result to Theorem \ref{my thm improved}, 
replacing $A$ by $B$, $C$ by $D$, etc.

\begin{proof}
Recall our notations:
\begin{itemize}
\item $p=a_ny^n+\cdots+a_1y+a_0$, where $\deg_y(p)=n$, $a_i \in k[x]$, $0 \leq i \leq n$, $a_n \neq 0$.
\item $q=c_ry^r+\cdots+c_1y+c_0$, where $\deg_y(q)=r$, $c_i \in k[x]$, $0 \leq i \leq r$, $c_r \neq 0$.
\end{itemize}

$u=\deg_x(a_n)$ and $v=\deg_x(c_r)$.

$n=\gcd(n,u)\tilde{n}=A\tilde{n}$, $u=\gcd(n,u)\tilde{u}=A\tilde{u}$,
$r=\gcd(r,v)\tilde{r}=C\tilde{r}$, $v=\gcd(r,v)\tilde{v}=C\tilde{v}$.
($\gcd(\tilde{n},\tilde{u})=1$ and $\gcd(\tilde{r},\tilde{v})=1$).

By our assumption $(i)$, $u \neq 0$ and $v \neq 0$,
hence, we can apply Lemma \ref{number theory lemma} to
$a=\tilde{u}, b=\tilde{n}, c=\tilde{v}, d=\tilde{r}, \epsilon=\gcd(n,u)\gcd(r,v)$,
and get that there exists $L \in \mathbb{N}$ (actually infinitely many $L$'s) such that: 
\begin{itemize}
\item The maximum of $\{\tilde{u}+L\tilde{n},\tilde{v}+L\tilde{r}\}$ is a prime number.
\item $\gcd(\gcd(n,u)\gcd(r,v),\tilde{u}+L\tilde{n})=1$ and 
$\gcd(\gcd(n,u)\gcd(r,v),\tilde{v}+L\tilde{r})=1$.
\end{itemize}

Let 
$$
M_1=\max\{\deg_x(a_n),\ldots,\deg_x(a_1),\deg_x(a_0)\},
$$
$$
M_2=\max\{\deg_x(c_r),\ldots,\deg_x(c_1),\deg_x(c_0)\},
$$
and $M= \max\{M_1,M_2\}$.

By Lemma \ref{number theory lemma}, there exists $L \in \mathbb{N}$ such that:
\begin{itemize}
\item $L > M$.
\item The maximum of $\{\tilde{u}+L\tilde{n},\tilde{v}+L\tilde{r}\}$ is a prime number.
\item $\gcd(\gcd(n,u)\gcd(r,v),\tilde{u}+L\tilde{n})=1$ and 
$\gcd(\gcd(n,u)\gcd(r,v),\tilde{v}+L\tilde{r})=1$.
\end{itemize}

Define $g: (x,y) \mapsto (x,y+x^L)$.

We now consider $gf$ ($gf$ has an invertible Jacobian, by the Chain Rule and by the invertibility of $g$) 
and show that $gf$ is an automorphism of $k[x,y]$,
and then trivially $f$ is an automorphism of $k[x,y]$ (as a product of two automorphisms).

Claim: $\deg((gf)(x))=u+Ln$ and $\deg((gf)(y))=v+Lr$.

Proof of Claim: We will explain why $\deg((gf)(x))=u+Ln$
(same explanation for $\deg((gf)(y))=v+Lr$, with $M_1$ replaced by $M_2$ etc.).

\begin{eqnarray*}
(gf)(x) & = & g(f(x))=g(p)=g(a_ny^n+\cdots+a_jy^j+\cdots+a_1y+a_0) \\ 
& = & g(a_n)g(y)^n+\cdots+g(a_j)g(y)^j+\cdots+g(a_1)g(y)+g(a_0) \\
& = & a_n(y+x^L)^n+\cdots+a_j(y+x^L)^j+\cdots+a_1(y+x^L)+a_0 \\
& = & \sum_{j=0}^{n} p_j,
\end{eqnarray*}
where 
$$
p_j:= a_j(y^j+jy^{j-1}x^L+\cdots+\binom{j}{i}y^{j-i}x^{Li}+\cdots+jyx^{L(j-1)}+x^{Lj}),
$$
$0 \leq j \leq n$.

For a fixed $p_j$, the (total) degrees of 
$$
y^j,jy^{j-1}x^L,\ldots,\binom{j}{i}y^{j-i}x^{Li},\ldots,jyx^{L(j-1)},x^{Lj}
$$
are, respectively, 
$$
j,j-1+L,\ldots,j-i+Li,\ldots,1+L(j-1),Lj.
$$

Trivially, $Lj > j-i+Li$, for every $0 \leq i < j$; 
indeed, $j > i$ implies that $(L-1)j > (L-1)i$,
so $Lj-j > Li-i$, and then $Lj > j+Li-i$.

Therefore, the (total) degree of $p_j$, $\deg(p_j)$, is $\deg_x(a_j)+Lj$.
(The $(1,1)$-leading term is the leading term of $a_j$ multiplied by $x^{Lj}$, 
so it has the form $\lambda_j x^{\deg_x(a_j)+Lj}$, for some $\lambda_j \in k^{\times}$).

Observe that $\deg(p_n) > \deg(p_j)$, for every $0 \leq j < n$;
indeed, by our choice of $L > M \geq M_1$, 
$$
\deg_x(a_j)-\deg_x(a_n) \leq M_1 < L \leq L(n-j),
$$
so 
$$
\deg_x(a_j)-\deg_x(a_n) < L(n-j)=Ln-Lj,
$$
and then 
$$
\deg(p_j)= \deg_x(a_j)+Lj < \deg_x(a_n)+Ln= \deg(p_n).
$$

Concluding that $\deg((gf)(x))=\deg(p_n)=\deg_x(a_n)+Ln=u+Ln$. 
(The $(1,1)$-leading term of $(gf)(x)$ is of the form $\lambda_n x^{u+Ln}$, for some $\lambda_n \in k^{\times}$).

So we have, 
$$
\deg((gf)(x))=u+Ln=A\tilde{u}+LA\tilde{n}=A(\tilde{u}+L\tilde{n})
$$ 
and 
$$
\deg((gf)(y))=v+Lr=C\tilde{v}+LC\tilde{r}=C(\tilde{v}+L\tilde{r}).
$$

Recall that by Lemma \ref{number theory lemma}, we have:
\begin{itemize}
\item The maximum of $\{\tilde{u}+L\tilde{n},\tilde{v}+L\tilde{r}\}$ is a prime number.
\item $\gcd(AC,\tilde{u}+L\tilde{n})=1$ and $\gcd(AC,\tilde{v}+L\tilde{r})=1$.
\end{itemize}

Therefore, it is clear that 
$\gcd(\deg((gf)(x)),\deg((gf)(y)))=\gcd(A,C)$.

$\gcd(A,C) \in \{1,8\} \cup P \cup 2P$,
so $\gcd(\deg((gf)(x)),\deg((gf)(y))) \in \{1,8\} \cup P \cup 2P$.

By Theorem \ref{thm 1}, $gf$ is an automorphism of $k[x,y]$.
\end{proof}

\begin{remark}[Noether's normalization trick]\label{trick}
The trick of defining $g: (x,y) \mapsto (x,y+x^L)$ with large enough $L \in \mathbb{N}$
such that the $(1,1)$-leading term of $(gf)(x)$ is of the form $\nu_1 x^{T_1}$ and
the $(1,1)$-leading term of $(gf)(y)$ is of the form $\nu_2 x^{T_2}$, 
for some $T_1,T_2 > L$ and $\nu_1,\nu_2 \in k^{\times}$,
is sometimes called 'Noether's normalization trick'.
It appears, for example, in ~\cite[Proposition 1.1]{hamann}.
\end{remark}

%\begin{itemize}
%\item [(i)] $uv \neq 0$.
%\item [(ii)] $\tilde{u} \neq \tilde{v}$ or $\tilde{n} \neq \tilde{r}$.
%\end{itemize}

The two conditions $(i)$ and $(ii)$ in Theorem \ref{my thm improved} 
were necessary in order to conclude that $f$ is an automorphism of $k[x,y]$.

If condition $(i)$ is satisfied but condition $(ii)$ is not satisfied,
namely, $uv \neq 0$, $\tilde{u}=\tilde{v}$ and $\tilde{n}=\tilde{r}$,
then the arguments in the proof of Theorem \ref{my thm improved} show that 
$$
\deg((gf)(x))=u+Ln=A\tilde{u}+LA\tilde{n}=A(\tilde{u}+L\tilde{n})
$$ 
and 
$$
\deg((gf)(y))=v+Lr=C\tilde{v}+LC\tilde{r}=C(\tilde{v}+L\tilde{r})=C(\tilde{u}+L\tilde{n}).
$$
Denote $w:=\tilde{u}+L\tilde{n} \in P$.
Clearly,  
$\gcd(\deg((gf)(x)),\deg((gf)(y)))=\gcd(A,C)w$.

Therefore, if we wish to obtain that $gf$ is an automorphism of $k[x,y]$,
then we should require that $\gcd(A,C) \in \{1,2\}$.    
Indeed, if $\gcd(A,C) \in \{1,2\}$, then 
$\gcd(\deg((gf)(x)),\deg((gf)(y)))=\gcd(A,C)w \in P \cup 2P$,
and we can apply Theorem \ref{thm 1}.

Another option is to apply Theorem \ref{thm 2} to one of $\{\deg((gf)(x)),\deg((gf)(x))\}$,
so we should require that $A \in \{1,4\} \cup P$ or $C \in \{1,4\} \cup P$.
Indeed, if $A \in \{1,4\} \cup P$, then 
$\deg((gf)(x))=Aw \in P \cup 4P \cup P^2$,
and we can apply Theorem \ref{thm 2}.
If $C \in \{1,4\} \cup P$, then 
$\deg((gf)(y))=Cw \in P \cup 4P \cup P^2$,
and we can apply Theorem \ref{thm 2}.

Therefore, we obtained:
\begin{theorem}\label{my thm i yes ii no}
Assume that $uv \neq 0$, $\tilde{u}=\tilde{v}$ and $\tilde{n}=\tilde{r}$.
If one of $\{A,C\}$ belongs to $\{1,4\} \cup P$
or $\gcd(A,C) \in \{1,2\}$,
then $f$ is an automorphism of $k[x,y]$.
\end{theorem}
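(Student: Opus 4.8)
The plan is to reuse almost verbatim the machinery already developed in the proof of Theorem \ref{my thm improved}, observing that the only place where condition $(ii)$ was used is the final $\gcd$ computation. First I would invoke Proposition \ref{prop} to dispose of the degenerate cases $n=0$ or $r=0$, so that from now on $n,r \in \mathbb{N}-\{0\}$; combined with the hypothesis $uv\neq 0$ this guarantees that Lemma \ref{number theory lemma} is applicable to the data $a=\tilde{u}$, $b=\tilde{n}$, $c=\tilde{v}$, $d=\tilde{r}$, $\epsilon=AC$, exactly as before. The coprimality conditions $\gcd(\tilde{n},\tilde{u})=1$ and $\gcd(\tilde{r},\tilde{v})=1$ are recorded in Notation \ref{notation}, so the hypotheses of the lemma are met.

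Next I would fix $M=\max\{M_1,M_2\}$ as in the earlier proof, choose $L>M$ with the two properties furnished by Lemma \ref{number theory lemma} (maximality of $\{\tilde{u}+L\tilde{n},\tilde{v}+L\tilde{r}\}$ being prime, and $\gcd(AC,\tilde{u}+L\tilde{n})=\gcd(AC,\tilde{v}+L\tilde{r})=1$), and define $g:(x,y)\mapsto (x,y+x^L)$. The same Noether-normalization computation (Remark \ref{trick}) gives $\deg((gf)(x))=u+Ln=A(\tilde{u}+L\tilde{n})$ and $\deg((gf)(y))=v+Lr=C(\tilde{v}+L\tilde{r})$. Now I would specialize the hypothesis $\tilde{u}=\tilde{v}$ and $\tilde{n}=\tilde{r}$: both degrees become multiples of the single prime $w:=\tilde{u}+L\tilde{n}=\tilde{v}+L\tilde{r}$, namely $\deg((gf)(x))=Aw$ and $\deg((gf)(y))=Cw$. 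Since $\gcd(AC,w)=1$ forces $\gcd(A,w)=\gcd(C,w)=1$, we get $\gcd(\deg((gf)(x)),\deg((gf)(y)))=\gcd(A,C)\,w$.

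It remains to run the two stated alternatives through Theorems \ref{thm 1} and \ref{thm 2}. If $\gcd(A,C)\in\{1,2\}$, then $\gcd(\deg((gf)(x)),\deg((gf)(y)))=\gcd(A,C)w\in P\cup 2P$, and Theorem \ref{thm 1} shows $gf$ is an automorphism. If instead $A\in\{1,4\}\cup P$, then $\deg((gf)(x))=Aw$ lies in $P\cup 4P\cup P^2$ (here using that $w$ is prime, so $Aw$ is a prime when $A=1$, is $4$ times a prime when $A=4$, and is a product of two primes when $A\in P$), and Theorem \ref{thm 2} shows $gf$ is an automorphism; symmetrically if $C\in\{1,4\}\cup P$, apply Theorem \ref{thm 2} to $\deg((gf)(y))=Cw$. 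In every case $gf$ is an automorphism of $k[x,y]$, hence so is $f=g^{-1}(gf)$, being a composite of two automorphisms.

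There is essentially no obstacle here beyond bookkeeping: the genuine content — the number-theoretic selection of $L$ via Lemma \ref{number theory lemma} and the degree computation for $gf$ — was already carried out in Theorem \ref{my thm improved}, and the present statement is exactly the \emph{residual} case where that argument produces an extra prime factor $w$ in the $\gcd$. The only point deserving a line of care is verifying that $\gcd(AC,w)=1$ really does imply $\gcd(A,w)=\gcd(C,w)=1$ (immediate, since any common prime divisor of $A$ and $w$ would divide $AC$ and $w$), and that a product of a prime with an element of $\{1,4\}\cup P$ indeed lands in $P\cup 4P\cup P^2$, which is the precise hypothesis range of Theorem \ref{thm 2}.
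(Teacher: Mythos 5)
Your proposal is correct and follows essentially the same route as the paper: the text preceding the theorem reruns the argument of Theorem \ref{my thm improved} (choice of $L>M$ via Lemma \ref{number theory lemma}, the map $g:(x,y)\mapsto(x,y+x^L)$, and the degree computation), obtains $\deg((gf)(x))=Aw$ and $\deg((gf)(y))=Cw$ with $w:=\tilde{u}+L\tilde{n}$ prime and coprime to $AC$, and then applies Theorem \ref{thm 1} when $\gcd(A,C)\in\{1,2\}$ and Theorem \ref{thm 2} when $A$ or $C$ lies in $\{1,4\}\cup P$, exactly as you do. Your extra checks (coprimality of $A,C$ with $w$, and that $Aw\in P\cup 4P\cup P^2$) are details the paper leaves implicit but are correct.
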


%%%%%%%%%%%%%%%%%%%%%%%%%%%%%%%%%%%%%%%%%%%%%%%%%%%%%%%%%%%%%%%%%%%%%%%%%%%%%%%%%%%%

If condition $(i)$ is not necessarily satisfied, then we have:
\begin{theorem}\label{my thm}
If one of $\{A,C\}$ belongs to $\{1,4\} \cup P$,
then $f$ is an automorphism of $k[x,y]$.
\end{theorem}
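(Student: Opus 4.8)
The plan is to follow the proof of Theorem~\ref{my thm improved} almost verbatim, but to apply Theorem~\ref{thm 2} to a single one of the two output degrees instead of applying Theorem~\ref{thm 1} to their gcd; it is precisely this change that lets us dispense with condition $(i)$. First I would invoke Proposition~\ref{prop} to reduce to the case $n\ge 1$ and $r\ge 1$: if $n=0$ or $r=0$ then $f$ is already an automorphism of $k[x,y]$. Since both the hypothesis and the conclusion of Theorem~\ref{my thm} are symmetric under interchanging $p$ with $q$ (equivalently, $A$ with $C$), it suffices to treat the case $A\in\{1,4\}\cup P$; the case $C\in\{1,4\}\cup P$ follows by running the same argument with the roles of $p$ and $q$ exchanged, i.e. with $\deg((gf)(y))$ in place of $\deg((gf)(x))$.

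Next I would recycle the Noether normalization computation from the proof of Theorem~\ref{my thm improved}: with $M:=\max\{\deg_x(a_n),\ldots,\deg_x(a_0),\deg_x(c_r),\ldots,\deg_x(c_0)\}$ and $g\colon(x,y)\mapsto(x,y+x^L)$ for an integer $L>M$, one obtains $\deg((gf)(x))=u+Ln$ and $\deg((gf)(y))=v+Lr$. I would emphasize that this degree computation used only $n\ge 1$ (respectively $r\ge 1$), and never the nonvanishing of $u$ (respectively $v$), so it is available here even when condition $(i)$ fails. Writing $n=A\tilde{n}$ and $u=A\tilde{u}$ with $\gcd(\tilde{n},\tilde{u})=1$, this reads $\deg((gf)(x))=u+Ln=A\tilde{u}+LA\tilde{n}=A(\tilde{u}+L\tilde{n})$.

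The only remaining point is to choose $L>M$ so that $w:=\tilde{u}+L\tilde{n}$ is a prime number. If $u\ne 0$, then $\tilde{u}\ge 1$ and $\tilde{n}\ge 1$ are coprime, so Dirichlet's theorem on arithmetic progressions furnishes infinitely many such $L$. If $u=0$, then $A=\gcd(n,0)=n$, $\tilde{u}=0$ and $\tilde{n}=1$, so $w=L$ and it is enough to pick any prime $L>M$, of which there are infinitely many. In either case $\deg((gf)(x))=Aw$ with $w\in P$, so $\deg((gf)(x))$ belongs to $P$ when $A=1$, to $4P$ when $A=4$, and to $P^2$ when $A\in P$. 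Since $gf$ has an invertible Jacobian (by the Chain Rule and the invertibility of $g$), Theorem~\ref{thm 2} shows that $gf$ is an automorphism of $k[x,y]$, and hence $f=g^{-1}(gf)$ is an automorphism of $k[x,y]$.

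I do not expect a genuine obstacle: once Theorem~\ref{my thm improved} and its proof are in hand, this is essentially a bookkeeping variation. The one place that needs care is precisely the degenerate case $u=0$ (and symmetrically $v=0$), where condition $(i)$ of Theorem~\ref{my thm improved} fails: one has to check that the normalization formula $\deg((gf)(x))=u+Ln$ still holds, note that then $A$ coincides with $n$ itself, and replace the appeal to Dirichlet's theorem by the bare infinitude of primes. It is also worth recording the trivial facts $w\in P$, $4w\in 4P$ and $Aw\in P^2$ (for $A,w$ prime) so that the three possibilities $A\in\{1,4\}\cup P$ are genuinely matched, one by one, with the three bullets of Theorem~\ref{thm 2}.
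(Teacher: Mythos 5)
Your proof is correct, but it is organized quite differently from the paper's. The paper proves Theorem \ref{my thm} by a case analysis on its own conditions $(i)$ and $(ii)$: when $uv\neq 0$ it simply delegates to Theorem \ref{my thm improved} or Theorem \ref{my thm i yes ii no} (and hence, in part, to Lemma \ref{number theory lemma} and Theorem \ref{thm 1} applied to $\gcd(\deg((gf)(x)),\deg((gf)(y)))$), and only in the degenerate branch $u=0$ or $v=0$ does it argue directly with the Noether trick and Theorem \ref{thm 2}. You instead give one uniform argument covering all branches: after reducing via Proposition \ref{prop} and symmetry to $A\in\{1,4\}\cup P$ with $n\geq 1$, you run the Noether normalization computation (which indeed never uses $u\neq 0$), get $\deg((gf)(x))=A(\tilde{u}+L\tilde{n})$, and choose $L>M$ making $\tilde{u}+L\tilde{n}$ prime --- by Dirichlet when $u\neq 0$ and by the bare infinitude of primes when $u=0$ (where $A=n$, $\tilde{u}=0$, $\tilde{n}=1$) --- so that Theorem \ref{thm 2} applies to the single degree $\deg((gf)(x))\in P\cup 4P\cup P^2$. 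What your route buys is self-containedness and economy of input: no appeal to conditions $(i)$, $(ii)$, to Theorems \ref{my thm improved} and \ref{my thm i yes ii no}, to Lemma \ref{number theory lemma} (Wong's refinement of Dirichlet), or to Theorem \ref{thm 1}; what the paper's route buys is brevity by reusing results already established, at the cost of a longer chain of dependencies. In the branch where condition $(i)$ fails the two arguments essentially coincide.
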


Trivially, if one of $\{A,C\}$ belongs to $\{1,4\} \cup P$,
then $\gcd(A,C) \in \{1,4\} \cup P \subset \{1,8\} \cup P \cup 2P$. 

\begin{proof}
\textbf{First case, condition $(i)$ is satisfied:}
There are two options:
\begin{itemize}
\item Condition $(ii)$ is satisfied: Then apply Theorem \ref{my thm improved}.
\item Condition $(ii)$ is not satisfied: Then apply Theorem \ref{my thm i yes ii no}.
\end{itemize}

\textbf{Second case, condition $(i)$ is not satisfied:} 
This means that $u=0$ or $v=0$.
There are two options:
\begin{itemize}
\item Condition $(ii)$ is satisfied: This means that 
$\tilde{u} \neq \tilde{v}$ or $\tilde{n} \neq \tilde{r}$.
If $u=0$ and $v \neq 0$, then 
$A=\gcd(n,u)=\gcd(n,0)=n$, so $\tilde{n}=1$ and $\tilde{u}=0$.

Denote $w:=\tilde{v}+L\tilde{r}$. 
Let $g: (x,y) \mapsto (x,y+x^L)$.
The arguments in the proof of Theorem \ref{my thm improved} show that 
$$
\deg((gf)(x))=u+Ln=Ln=LA
$$ 
and 
$$
\deg((gf)(y))=v+Lr=C\tilde{v}+LC\tilde{r}=C(\tilde{v}+L\tilde{r})=Cw.
$$
(We see that  
$\gcd(\deg((gf)(x)),\deg((gf)(y)))=\gcd(LA,C)$,
which does not help much).

If $A \in \{1,4\} \cup P$, then take large enough $L \in P$,
and get that 
$$
\deg((gf)(x))=LA \in P \cup 4P \cup P^2,
$$
so we can apply Theorem \ref{thm 2}.

If $C \in \{1,4\} \cup P$, then take large enough $L \in \mathbb{N}$,
such that $w=\tilde{v}+L\tilde{r} \in P$ (such $L$ exists by Dirichlet's theorem).
Then,
$$
\deg((gf)(y))=Cw \in P \cup 4P \cup P^2,
$$
so we can apply Theorem \ref{thm 2}.

If $u=v=0$, then take large enough $L \in P$,
and get that
$$
\deg((gf)(x))=u+Ln=Ln=LA
$$ 
and 
$$
\deg((gf)(y))=v+Lr=Lr=LC.
$$
If $A \in \{1,4\} \cup P$, then $\deg((gf)(x))=LA \in P \cup 4P \cup P^2$,
and if $C \in \{1,4\} \cup P$, then $\deg((gf)(y))=LC \in P \cup 4P \cup P^2$,
so we can apply Theorem \ref{thm 2}.
%%%%%%%%%%%%%%%%%%%%%%%%%%%%%%%%%%%%%%%%%%%%%%%%%%%%%%%%%%%%%%%

\item Condition $(ii)$ is not satisfied: This means that 
$\tilde{u} = \tilde{v}$ and $\tilde{n} = \tilde{r}$.

If $u=0$, then $A=\gcd(n,u)=\gcd(n,0)=n$, 
so $\tilde{n}=1$ and $\tilde{u}=0$.
Then $\tilde{v}=\tilde{u}=0$, so $v=C\tilde{v}=C0=0$.

Let $g: (x,y) \mapsto (x,y+x^L)$, with large enough $L \in P$.
The arguments in the proof of Theorem \ref{my thm improved} show that 
$$
\deg((gf)(x))=u+Ln=Ln=LA
$$ 
and 
$$
\deg((gf)(y))=v+Lr=Lr=LC
$$
If $A \in \{1,4\} \cup P$, then $\deg((gf)(x))=LA \in P \cup 4P \cup P^2$,
and we can apply Theorem \ref{thm 2}.
If $C \in \{1,4\} \cup P$, then $\deg((gf)(x))=LC \in P \cup 4P \cup P^2$,
and we can apply Theorem \ref{thm 2}.
\end{itemize}
\end{proof}

%%%%%%%%%%%%%%%%%%%%%%%%%%%%%%%%%%%%%%%%%%%%%%%%%%%%%%%%%%%%%%%%%%%%%%%%%%%%%%%%%%%%%%%%%%%%%

\section{Examples}
Observe that there are cases where at least one of Theorems \ref{my thm improved}, \ref{my thm i yes ii no} 
and \ref{my thm} is applicable, while none of Theorems \ref{thm 1} and \ref{thm 2} is,
and vice versa, there are cases where at least one of Theorems \ref{thm 1} and \ref{thm 2} 
is applicable, while none of our theorems is. 
Also, of course, there are cases where our theorems and the original theorems are not applicable.

More elaborately:
\begin{example}[One of our theorems is applicable, while none of the original theorems is]

\textbf{First example:}
Denote the set of prime numbers strictly less than $m$ by $P_m$.
Assume that $p$ is of the following form:
$p=x^m+\sum_{j \in \{1,4\} \cup P_{m}}e_j x^j+e_0$, $e_j \in k[y]$ and $\deg_y(e_0) < \deg_y(p)$.
Then $f$ is an automorphism of $k[x,y]$.
Indeed, it is not difficult to see that
$A \in \{1,4\} \cup P_{m} \subset \{1,4\} \cup P$, so we can apply Theorem \ref{my thm}
and get that $f$ is an automorphism of $k[x,y]$.

\textbf{Second example:}
Let 
$$
f: (x,y) \mapsto (x+y+x^2+y^{15}+2xy^{15}+y^{30},y+x^2+2xy^{15}+y^{30}).
$$
$$
\Jac(p,q)=(1+2x+2y^{15})(1+30xy^{14}+30y^{29})-(1+15y^{14}+30xy^{14}+30y^{29})(2x+2y^{15}).
$$
Denote $\epsilon=2x+2y^{15}$ and $\delta=30xy^{14}+30y^{29}$.

Then,
\begin{eqnarray*}
\Jac(p,q) & = & (1+\epsilon)(1+\delta)-(1+15y^{14}+\delta)(\epsilon) \\
& = & 1+\delta+\epsilon+\epsilon\delta-\epsilon-15y^{14}\epsilon-\delta\epsilon \\
& = & 1+\delta-15y^{14}\epsilon=1+30xy^{14}+30y^{29}-15y^{14}(2x+2y^{15}) \\
& = & 1+30xy^{14}+30y^{29}-30xy^{14}-30y^{29}=1.
\end{eqnarray*}

Here, $n=\deg_y(p)=30$, $m=\deg_x(p)=2$, $r=\deg_y(q)=30$, $s=\deg_x(q)=2$,
$a_n=1$, $b_m=1$, $c_r=1$, $d_s=1$.
Therefore, 
\begin{eqnarray*} A= \gcd(n,\deg_x(a_n))=\gcd(30,0)=30, B= \gcd(m,\deg_y(b_m))=\gcd(2,0)=2, 
\end{eqnarray*}
\begin{eqnarray*} C= \gcd(r,\deg_x(c_r))=\gcd(30,0)=30, D= \gcd(s,\deg_y(d_s))=\gcd(2,0)=2.
\end{eqnarray*}

Here, $B=2 \in \{1,4\} \cup P$ (and also $D=2 \in \{1,4\} \cup P$), 
so we can apply (only) Theorem \ref{my thm} and get that $f$ is an automorphism of $k[x,y]$.

It is not possible to apply either one of the original theorems,
since here $\deg(p)=\deg(q)=\gcd(\deg(p),\deg(q))=30$, is a product of three primes.

There are other ways to show that $f$ is an automorphism of $k[x,y]$, 
independent of Theorem \ref{my thm}, for example: 
$p(x,0)=x+x^2$ and $q(x,0)=x^2$, hence $k[p(x,0),q(x,0)]=k[x+x^2,x^2]=k[x]$.
Then ~\cite[Theorem 3.5]{radial similarity} implies that $f$ is an automorphism of $k[x,y]$.
If we further assume that $k$ is algebraically closed, then also ~\cite{inj on one line} 
implies that $f$ is an automorphism of $k[x,y]$.
\end{example}

\begin{example}[One of the original theorems is applicable, while none of our theorems is]
Let 
$$
f: (x,y) \mapsto (x+(x-y)^{15},y+(x-y)^{15}).
$$

$$
\Jac(p,q)=(1+15(x-y)^{14})(1-15(x-y)^{14})-(-15(x-y)^{14})(15(x-y)^{14}).
$$
Denote $w:=15(x-y)^{14}$. 
Then $\Jac(p,q)=(1+w)(1-w)-(-w)(w)=1-w^2+w^2=1$.

Here, $n=\deg_y(p)=15$, $m=\deg_x(p)=15$, $r=\deg_y(q)=15$, $s=\deg_x(q)=15$,
$a_n=1$, $b_m=1$, $c_r=1$, $d_s=1$. 
Therefore,
\begin{eqnarray*}
A= \gcd(n,\deg_x(a_n))=\gcd(15,0)=15, B= \gcd(m,\deg_y(b_m))=\gcd(15,0)=15, \end{eqnarray*}
\begin{eqnarray*} 
C= \gcd(r,\deg_x(c_r))=\gcd(15,0)=15, D= \gcd(s,\deg_y(d_s))=\gcd(15,0)=15. \end{eqnarray*}

Condition $(i)$ is not satisfied, so none of Theorems \ref{my thm improved}
and \ref{my thm i yes ii no} is applicable.
None of $\{A,B,C,D\}$ belongs to $\{1,4\} \cup P$, 
so Theorem \ref{my thm} is not applicable.

However, $\deg(p)=\deg(q)=15$ is a product of two primes,
so one of the original theorems is applicable.

There are other ways to show that $f$ is an automorphism of $k[x,y]$, 
independent of the original theorems, the easiest is just to notice that
$x=p-(p-q)^{15} \in k[p,q]$ and $y=q-(p-q)^{15} \in k[p,q]$.
Another way is the same as in the previous example, namely, 
applying ~\cite[Theorem 3.5]{radial similarity} to 
$k[p(x,0),q(x,0)]=k[x+x^{15},x^{15}]=k[x]$.
\end{example}

\begin{example}[None of our theorems and the original theorems is applicable]
Let 
$$
f: (x,y) \mapsto (x+(x-y)^{30},y+(x-y)^{30}).
$$
\begin{eqnarray*}
\Jac(p,q) & = & (1+30(x-y)^{29})(1-30(x-y)^{29})-(-30(x-y)^{29})(30(x-y)^{29}) \\
& = & (1+w)(1-w)-(-w)(w)=1, \end{eqnarray*}
where $w:=30(x-y)^{29}$.

Here, $n=\deg_y(p)=30$, $m=\deg_x(p)=30$, $r=\deg_y(q)=30$, $s=\deg_x(q)=30$.
$a_n=1$, $b_m=1$, $c_r=1$, $d_s=1$.
Therefore,
\begin{eqnarray*} A= \gcd(n,\deg_x(a_n))=\gcd(30,0)=30, B= \gcd(m,\deg_y(b_m))=\gcd(30,0)=30,
\end{eqnarray*} \begin{eqnarray*} C= \gcd(r,\deg_x(c_r))=\gcd(30,0)=30, D= \gcd(s,\deg_y(d_s))=\gcd(30,0)=30.
\end{eqnarray*}

Condition $(i)$ is not satisfied, so none of Theorems \ref{my thm improved}
and \ref{my thm i yes ii no} is applicable.
None of $\{A,B,C,D\}$ belongs to $\{1,4\} \cup P$, 
so Theorem \ref{my thm} is not applicable.

$\deg(p)=\deg(q)=\gcd(\deg(p),\deg(q))=30$,
is a product of three primes,
so none of the original theorems is applicable.

However, it is easy to see that $f$ is an automorphism of $k[x,y]$:
$$
x=p-(x-y)^{30}=p-(x+(x-y)^{30}-y-(x-y)^{30})^{30}=p-(p-q)^{30} \in k[p,q],
$$
$$
y=q-(x-y)^{30}=q-(x+(x-y)^{30}-y-(x-y)^{30})^{30}=q-(p-q)^{30} \in k[p,q].
$$
\end{example}

%%%%%%%%%%%%%%%%%%%%%%%%%%%%%%%%%%%%%%%%%%%%%%%%%%%%%%%%%%%%%%%%%%%%%%%%%%%%%%%%%%%%%%%%%%%%%%

\section{An application}

It is known (see, for example, ~\cite[Corollary 7.2]{nagata} ~\cite{lang} ~\cite{makar}) 
that if $f$ is \textit{not} an automorphism of $k[x,y]$, 
then there exists an automorphism $g$ of $k[x,y]$ such that:
\begin{itemize}
\item $\deg(g(p))=\deg(p)$, $\deg(g(q))=\deg(q)$.
\item $l_{1,1}(g(p))=\epsilon x^{\alpha\mu}y^{\beta\mu}$, $l_{1,1}(g(q))=\delta x^{\alpha\nu}y^{\beta\nu}$,
where $\epsilon, \delta \in k^{\times}$,
$1 < \alpha < \beta$, $\gcd(\alpha,\beta) > 1$, $1 < \nu < \mu$, $\gcd(\mu,\nu)=1$.
($l_{1,1}(r)$ denotes the $(1,1)$-leading term of $r \in k[x,y]$).
\item The Newton polygon of $g(p)$ is contained in the rectangle having edges
$\{(0,0),(\alpha\mu,0),(0,\beta\mu),(\alpha\mu,\beta\mu)\}$, and similarly for $g(q)$.
\end{itemize}

Write $\alpha=d\alpha'$ and $\beta=d\beta'$, where $\gcd(\alpha',\beta')=1$ ($d=\gcd(\alpha,\beta) > 1$).

If we 'translate' our previous notations $n,m,r,s,a_n,b_m,c_r,d_s$ to the counterexample $gf$, we get: 
$n=\deg_y(g(p))=\beta\mu$, $m=\deg_x((g(p))=\alpha\mu$, $r=\deg_y(g(q))=\beta\nu$, $s=\deg_x((g(q))=\alpha\nu$, 
$a_n=\epsilon x^{\alpha\mu}$, $b_m=\epsilon y^{\beta\mu}$, $c_r=\delta x^{\alpha\nu}$, $d_s=\delta y^{\beta\nu}$.
$u=\deg_x(a_n)= \alpha\mu$, $v=\deg_x(c_r)= \alpha\nu$.
$A=\gcd(n,u)=\gcd(\beta\mu,\alpha\mu)=\gcd(d\beta'\mu,d\alpha'\mu)=d\mu$,
$C=\gcd(r,v)=\gcd(\beta\nu,\alpha\nu)=\gcd(d\beta'\nu,d\alpha'\nu)=d\nu$,
$B=A$, $D=C$.

Concerning the two conditions $(i)$ and $(ii)$: 
\begin{itemize}
\item [(i)] $uv=\alpha\mu\alpha\nu \neq 0$, so condition $(i)$ is satisfied.
\item [(ii)] $n=d\mu\beta'$ and $u=d\mu\alpha'$, so $\tilde{n}=\beta'$ and $\tilde{u}=\alpha'$.
$r=d\nu\beta'$ and $v=d\nu\alpha'$, so $\tilde{r}=\beta'$ and $\tilde{v}=\alpha'$,
so condition $(ii)$ is \textit{not} satisfied.
\end{itemize}

We can apply Theorem \ref{my thm i yes ii no} and obtain the following:

\begin{proposition}\label{prop d > 2}
If $(g(p),g(q))$ is a counterexample to the two-dimensional Jacobian Conjecture, 
then $\gcd(\alpha,\beta)= d > 2$.
\end{proposition}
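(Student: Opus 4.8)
The plan is to argue by contraposition using Theorem \ref{my thm i yes ii no}. Saying that $(g(p),g(q))$ is a counterexample to the two-dimensional Jacobian Conjecture means precisely that $gf$ has an invertible Jacobian but is \emph{not} an automorphism of $k[x,y]$, and from this failure I will extract a divisibility restriction on $d$. First I would record, exactly as in the discussion preceding the proposition, the translation of the notation to $gf$: here $u=\alpha\mu\neq 0$ and $v=\alpha\nu\neq 0$, so condition $(i)$ holds, while $\tilde n=\tilde r=\beta'$ and $\tilde u=\tilde v=\alpha'$, so condition $(ii)$ fails. Thus the standing hypotheses $uv\neq 0$, $\tilde u=\tilde v$, $\tilde n=\tilde r$ of Theorem \ref{my thm i yes ii no} are all satisfied for $gf$.

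Next I would invoke the contrapositive of Theorem \ref{my thm i yes ii no}: since $gf$ is not an automorphism, neither $A$ nor $C$ can lie in $\{1,4\}\cup P$, and, crucially, $\gcd(A,C)\notin\{1,2\}$. It remains only to evaluate $\gcd(A,C)$. From the translation we have $A=\gcd(n,u)=\gcd(\beta\mu,\alpha\mu)=d\mu$ and $C=\gcd(r,v)=\gcd(\beta\nu,\alpha\nu)=d\nu$, and since $\gcd(\mu,\nu)=1$ we get $\gcd(A,C)=\gcd(d\mu,d\nu)=d\,\gcd(\mu,\nu)=d$. Hence $d\notin\{1,2\}$, and combined with the standing hypothesis $d=\gcd(\alpha,\beta)>1$ this forces $d>2$, which is the assertion.

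I do not expect a genuine obstacle: the argument is essentially immediate once the notation is translated. The only points demanding care are the verification that the hypotheses of Theorem \ref{my thm i yes ii no} really hold for $gf$ (carried out above) and the identity $\gcd(d\mu,d\nu)=d$, which rests on $\gcd(\mu,\nu)=1$. One could additionally remark that the contrapositive also yields that neither $d\mu$ nor $d\nu$ is prime or equal to $1$ or $4$, a slightly stronger structural constraint on any counterexample, but this is not needed for the stated conclusion.
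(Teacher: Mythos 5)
Your proof is correct and follows essentially the same route as the paper: both rely on the translation $A=d\mu$, $C=d\nu$ (with $\gcd(\mu,\nu)=1$) and the verification that conditions $(i)$ and $(ii)$ of Theorem \ref{my thm i yes ii no} hold, resp.\ fail, for $gf$, and then apply that theorem to rule out $\gcd(A,C)\in\{1,2\}$. The paper phrases it as a contradiction after assuming $d=2$ (since $d>1$ is known), while you take the contrapositive directly; this is only a cosmetic difference.
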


\begin{proof}
Otherwise, $d=2$ (it is already known that $d > 1$).
We have computed that $A=d\mu$ and $C=d\nu$.
Therefore, $\gcd(A,C)=\gcd(d\mu,d\nu)=d=2 \in \{1,2\}$.
Then Theorem \ref{my thm i yes ii no} implies that $gf$ is an automorphism of $k[x,y]$,
a contradiction.
\end{proof}

For example, Proposition \ref{prop d > 2} implies that $\gcd(\deg(g(p)),\deg(g(q))) \neq 8$.
Indeed, otherwise,
$$
d(\alpha'+\beta')=\alpha+\beta=\gcd((\alpha+\beta)\mu,(\alpha+\beta)\nu)=\gcd(\deg(g(p)),\deg(g(q)))=8.
$$
There are, apriori, four options for $(d,\alpha'+\beta')$:
\begin{itemize}
\item $(1,8)$: It is known that $d \neq 1$.
\item $(2,4)$: By Proposition \ref{prop d > 2}, $d \neq 2$. 
\item $(4,2)$: $\alpha'+\beta'=2$ is impossible, since $0 \neq d \alpha'=\alpha < \beta= d \beta'$,
so $0 \neq \alpha' < \beta'$.
\item $(8,1)$: $\alpha'+\beta'=1$ is impossible.
\end{itemize}
None of the four options is possible, hence $\gcd(\deg(g(p)),\deg(g(q))) \neq 8$.

However, it is known that $\gcd(\deg(g(p)),\deg(g(q))) \geq 36$
except for two possible cases $\{(75,125),(64,224)\}$; see ~\cite{valqui discarding}.
Unfortunately, Proposition \ref{prop d > 2} is not helpful in discarding these two possible cases
or in raising the bound $36$.

Summarizing, Proposition \ref{prop d > 2} just improves $d$ from $d>1$ to $d>2$,
and we do not know if it can improve other known bounds of a counterexample.

\begin{remark}
Theorem \ref{thm 1} says that $f$ is an automorphism of $k[x,y]$ 
if 
$$
\gcd(\deg(p),\deg(q)) \in \{1,8\} \cup P \cup 2P.
$$
If one will prove that $f$ is an automorphism of $k[x,y]$ 
if $\gcd(\deg(p),\deg(q)) \in 3P$, 
then in Theorem \ref{my thm i yes ii no} we will have 
``$\gcd(A,C) \in \{1,2,3\}$", and then in Proposition \ref{prop d > 2}
we will have ``$d > 3$".

More generally, if one will prove that $f$ is an automorphism of $k[x,y]$ 
if 
$$
\gcd(\deg(p),\deg(q)) \in P^2,
$$ 
then in Theorem \ref{my thm i yes ii no} we will have 
``$\gcd(A,C) \in \{1\} \cup P$", and then in Proposition \ref{prop d > 2}
we will have ``$d  \notin \{1\} \cup P$".

In case that one has proved that $f$ is an automorphism of $k[x,y]$ 
if 
$$
\gcd(\deg(p),\deg(q)) \in P^2, 
$$
then by the same arguments as in the proof of
Theorem \ref{thm 2} (~\cite[pages 169-170, proof of (2)]{nagata}),
we get that $f$ is an automorphism of $k[x,y]$ 
if $\deg(p)$ or $\deg(q)$ belongs to $P^3$.
\end{remark}

Finally, we wonder if it is possible to obtain new results concerning the Jacobian Conjecture
by applying known results from number theory, and perhaps also vice versa.

\section{Acknowledgements}
I would like to thank the three generous MathStackExchange users for their inspiring answers,
in chronological order:
\begin{itemize}
\item Jon Wharf, user Joffan: ~\cite{mse joffan}. 
\item Sungjin Kim, user i707107: ~\cite{mse i707107}.
\item Erick B. Wong, user Erick Wong: ~\cite{mse wong 1} ~\cite{mse wong 2}.
\end{itemize}
In a previous, unpublished version of this paper, Jon Wharf and Sungjin Kim allowed me to use their MSE answers,
while in the current version, Erick B. Wong allowed me to use his MSE answer, 
which proves my Lemma ~\ref{number theory lemma}.

\bibliographystyle{plain}

\end{document}